\documentclass[a4paper,leqno,10pt]{article}

\usepackage{amsmath}
\usepackage{amsthm}
\usepackage{amssymb}
\usepackage{hyperref}
\usepackage{caption}
\usepackage{subcaption}
\usepackage{graphicx}
\usepackage{lineno}


\newtheorem{theorem}{Theorem}
\newtheorem*{maintheorem}{Main Theorem}
\newtheorem{lemma}[theorem]{Lemma}
\newtheorem{proposition}[theorem]{Proposition}

\newtheorem{algorithm}[theorem]{Algorithm}

\newtheorem{remark}[theorem]{Remark}
\numberwithin{equation}{section}
\numberwithin{theorem}{section}

\newcommand{\abs}[1]{|#1|}
\newcommand{\set}[1]{\left\{#1\right\}}

\newcommand{\arrowse}[0]{\overset{e}{\rightarrow}}
\newcommand{\arrowsv}[0]{\overset{v}{\rightarrow}}

\newcommand{\mH}[0]{\mathcal{H}}
\newcommand{\mL}[0]{\mathcal{L}}

\DeclareMathOperator{\G}{G}
\DeclareMathOperator{\V}{V}
\DeclareMathOperator{\E}{E}


\begin{document}


\title{The edge Folkman number\\ $F_e(3, 3; 4)$ is greater than 19}

\author{
	Aleksandar Bikov\thanks{Corresponding author} \hfill Nedyalko Nenov\\
	\let\thefootnote\relax\footnote{Email addresses: \texttt{asbikov@fmi.uni-sofia.bg}, \texttt{nenov@fmi.uni-sofia.bg}}\\
	Faculty of Mathematics and Informatics\\
	Sofia University "St. Kliment Ohridski"\\
	5, James Bourchier Blvd.\\
	1164 Sofia, Bulgaria
}

\maketitle

\begin{abstract}
The set of the graphs which do not contain the complete graph on $q$ vertices $K_q$ and have the property that in every coloring of their edges in two colors there exist a monochromatic triangle is denoted by $\mH_e(3, 3; q)$. The edge Folkman numbers $F_e(3, 3; q) = \min\set{\abs{\V(G)} : G \in \mH_e(3, 3; q)}$ are considered. Folkman proved in 1970 that $F_e(3, 3; q)$ exists if and only if $q \geq 4$. From the Ramsey number $R(3, 3) = 6$ it becomes clear that $F_e(3, 3; q) = 6$ if $q \geq 7$. It is also known that $F_e(3, 3; 6) = 8$ and $F_e(3, 3; 5) = 15$. The upper bounds on the number $F_e(3, 3; 4)$ which follow from the construction of Folkman and from the constructions of some other authors are not good. In 1975 Erdo\"s posed the problem to prove the inequality $F_e(3, 3; 4) < 10^{10}$. This Erdo\"s problem was solved by Spencer in 1978. The last upper bound on $F_e(3, 3; 4)$ was obtained in 2012 by Lange, Radziszowski and Xu, who proved that $F_e(3, 3; 4) \leq 786$. The best lower bound on this number is 19 and was obtained 10 years ago by Radziszowski and Xu. In this paper, we improve this result by proving $F_e(3, 3; 4) \geq 20$. At the end of the paper, we improve the known bounds on the vertex Folkman number  $F_v(2, 3, 3; 4)$ by proving $20 \leq F_v(2, 3, 3; 4) \leq 24$.

\bigskip\emph{Keywords: } Folkman number, clique number, independence number, chromatic number
\end{abstract}



\section{Introduction}

In this paper only finite, non-oriented graphs without loops and multiple edges are considered. The vertex set and the edge set of a graph $G$ are denoted by $\V(G)$ and $\E(G)$ respectively. Let $G_1$ and $G_2$ be two graphs. Then, we denote by $G_1+G_2$ the graph $G$ for which $\V(G) = \V(G_1) \cup \V(G_2)$ and $\E(G) = \E(G_1) \cup \E(G_2) \cup E'$, where $E' = \set{[x, y] : x \in \V(G_1), y \in \V(G_2)}$, i.e. $G$ is obtained by making every vertex of $G_1$ adjacent to every vertex of $G_2$. The Ramsey number is denoted by $R(p, q)$. More detailed information on the Ramsey numbers, which we use in this paper, can be found in \cite{Rad14}. All undefined terms and notations can be found \cite{Wes01}.\\

The expression $G \arrowse (3, 3)$ means that in every coloring of the edges of the graph $G$ in two colors there is a monochromatic triangle. It is well known that $K_6 \arrowse (3, 3)$. Therefore, if $\omega(G) \geq 6$, then $G \arrowse (3, 3)$. In \cite{EH67} Erd\"os and Hajnal posed the following problem: \emph{Is there a graph $G \arrowse (3, 3)$ with $\omega(G) < 6$}? A positive answer was given by several authors, and Folkman \cite{Fol70} was the first to construct such a graph with clique number 3.

Later, we will use the following fact:
\begin{equation}
\label{equation: G arrowse (3, 3) Rightarrow chi(G) geq 6}
G \arrowse (3, 3) \Rightarrow \chi(G) \geq 6,
\end{equation}
which is a special case of a more general result from \cite{Lin72}.

Denote:

$\mH_e(3, 3; q) = \set{G : G \arrowse (3, 3) \mbox{ and } \omega(G) < q}$.

$\mH_e(3, 3; q; n) = \set{G : G \in \mH_e(3, 3; q) \mbox{ and } \abs{\V(G)} = n}$.

The edge Folkman number $F_e(3, 3; q)$ is defined with

$F_e(3, 3; q) = \min{\set{\abs{\V(G)} : G \in \mH_e(3, 3; q)}}$

According to Folkman`s result \cite{Fol70}, $F_e(3, 3; q)$ exists if and only if $q \geq 4$.

Since $K_6 \arrowse (3, 3)$ and $K_5 \not\arrowse (3, 3)$, $F_e(3, 3; q) = 6, q \geq 7$.

In 1968 Graham \cite{Gra68} obtained the equality $F_e(3, 3; 6) = 8$ by proving that $K_3+C_5 \arrowse (3, 3)$.

Nenov \cite{Nen81} constructed a 15-vertex graph in $\mH_e(3, 3; 5)$ in 1981, thus proving $F_e(3, 3; 5) \leq 15$. In 1999 Piwakowski, Radziszowski and Urba{\'n}ski \cite{PRU99} completed the computation of the number $F_e(3, 3; 5)$ by proving with the help of a computer that $F_e(3, 3; 5) \geq 15$. They also obtained all graphs in $\mH_e(3, 3; 5; 15)$.

The graph $G$ obtained by the construction of Folkman, for which $G \arrowse (3, 3)$ and $\omega(G) = 3$, has a very large number of vertices. Because of this, in 1975 Erd\"os \cite{Erd75} posed the problem to prove the inequality $F_e(3, 3; 4) < 10^{10}$. In 1986 Frankl and R\"odl \cite{FR86} almost solved this problem by showing that $F_e(3, 3; 4) < 7.02 \times 10^{11}$. In 1988 Spencer \cite{Spe88} proved the inequality $F_e(3, 3; 4) < 3 . 10^9$, after an erratum by Hovey, by using probabilistic methods. In 2008 Lu \cite{Lu08} constructed a 9697-vertex graph in $\mH_e(3, 3; 4)$, thus considerably improving the upper bound on $F_e(3, 3; 4)$. Soon after that, Lu`s result was improved by Dudek and R\"odl \cite{DR08}, who proved $F_e(3, 3; 4) \leq 941$. The best upper bound known on $F_e(3, 3; 4)$ was obtained in 2012 by Lange, Radziszowski and Xu \cite{LRX14}, who constructed a 786-vertex graph in $\mH_e(3, 3; 4)$, thus showing that $F_e(3, 3; 4) \leq 786$.

In 1972 Lin \cite{Lin72} proved that $F_e(3, 3; 4) \geq 11$. The lower bound was improved by Nenov \cite{Nen83}, who showed in 1981 that $F_e(3, 3; 4) \geq 13$. In 1984 Nenov \cite{Nen84} proved that every 5-chromatic $K_4$-free graph has at least 11 vertices, from which it is easy to derive that $F_e(3, 3; 4) \geq 14$. From $F_e(3, 3; 5) = 15$ \cite{PRU99} it follows easily, that $F_e(3, 3; 4) \geq 16$. The best lower bound known on $F_e(3, 3; 4)$ was obtained in 2007 by Radziszowski and Xu \cite{RX07}, who proved with the help of a computer that $F_e(3, 3; 4) \geq 19$. According to Radziszowski and Xu \cite{RX07}, any method to improve the bound $F_e(3, 3; 4) \geq 19$ would likely be of significant interest. 

A more detailed view on results related to the numbers $F_e(3, 3; q)$ is given in the book \cite{Soi08}, and also in the papers \cite{RX07}, \cite{Gra12}, \cite{LRX14} and \cite{RX14}.\\

In this paper, we improve the lower bound on the number$F_e(3, 3; 4)$ by proving with the help of a computer the following
\begin{maintheorem}
\label{maintheorem: F_e(3, 3; 4) geq 20}
$F_e(3, 3; 4) \geq 20.$
\end{maintheorem}

At the end of the paper, we consider the vertex Folkman number $F_v(2, 3, 3; 4)$ and prove $20 \leq F_v(2, 3, 3; 4) \leq 24$ (Theorem \ref{theorem: 20 leq F_v(2, 3, 3; 4) leq 24}). This is an improvement over the known bounds $19 \leq F_v(2, 3, 3; 4) \leq 30$ from \cite{SXL09} and \cite{SLHX11}.\\

This paper is organized in 4 sections. In the first section, the necessary notations are introduced, an overview of the related results is given, and the Main Theorem is formulated. In the second section, we present a computer algorithm (Algorithm \ref{algorithm: finding mL_(max)(n; p; s)}), with the help of which in the third section we prove the Main Theorem. In the last fourth section, new bounds on the number $F_v(2, 3, 3; 4)$ are obtained (Theorem \ref{theorem: 20 leq F_v(2, 3, 3; 4) leq 24}).

\section{Algorithm}

Let $G \in \mH_e(3, 3; 4; n)$, $A \subseteq \V(G)$ be an independent set of vertices of $G$, $\abs{A} = s$ and $H = G - A$. Then, obviously, $G$ is a subgraph of $\overline{K}_s + H$, and therefore $\overline{K}_s + H \in \mH_e(3, 3; 5; n)$, from which it is easy to see that $K_1 + H \in \mH_e(3, 3; 5; n - s + 1)$. By this reasoning, in \cite{RX07} it is proved, but not explicitly formulated (see the proofs of Theorem 2 and Theorem 3), the following
\begin{proposition}
\label{proposition: K_1 + H in mH_e(3, 3; 5; n - abs(A) + 1)}
Let $G \in \mH_e(3, 3; 4; n)$, $A \subseteq \V(G)$ be an independent set of vertices of $G$ and $H = G - A$. Then, $K_1 + H \in \mH_e(3, 3; 5; n - \abs{A} + 1)$.
\end{proposition}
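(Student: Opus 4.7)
The plan is to verify the three defining properties of $\mH_e(3, 3; 5; n - \abs{A} + 1)$ for $K_1 + H$: the vertex count, the bound on the clique number, and the arrowing property. Write $s = \abs{A}$ so that $\abs{\V(H)} = n - s$ and $\abs{\V(K_1 + H)} = n - s + 1$, which already settles the vertex count. Since $H$ is an induced subgraph of $G$, we have $\omega(H) \leq \omega(G) \leq 3$, and therefore $\omega(K_1 + H) = \omega(H) + 1 \leq 4 < 5$, as required.

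For the arrowing property I would proceed in two steps, following the route sketched in the paragraph preceding the proposition. First I would observe that because $A$ is independent in $G$, adding to $G$ the missing edges between $A$ and $\V(H)$ only enlarges the edge set; thus $G$ is a spanning subgraph of $\overline{K}_s + H$. Any $2$-coloring of the edges of $\overline{K}_s + H$ restricts to a $2$-coloring of the edges of $G$, which by $G \arrowse (3, 3)$ must contain a monochromatic triangle, and this triangle is also monochromatic in the original coloring of $\overline{K}_s + H$. Hence $\overline{K}_s + H \arrowse (3, 3)$.

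The main (though still short) step is to deduce $K_1 + H \arrowse (3, 3)$ from this, which I would do by a blow-up argument. Suppose for contradiction that there is a $2$-coloring $\chi$ of the edges of $K_1 + H$ with no monochromatic triangle; let $v$ be the unique vertex of the $K_1$ and let $v_1, \dots, v_s$ be the independent vertices of $\overline{K}_s$ in $\overline{K}_s + H$. Define a coloring $\chi'$ of $\overline{K}_s + H$ by keeping $\chi$ on the edges of $H$ and setting $\chi'([v_i, h]) = \chi([v, h])$ for every $h \in \V(H)$ and every $i$. Since $v_1, \dots, v_s$ are pairwise non-adjacent in $\overline{K}_s + H$, every triangle of $\overline{K}_s + H$ either lies inside $H$ or consists of exactly one $v_i$ together with an edge of $H$; in both cases its $\chi'$-coloring is identical to the $\chi$-coloring of a corresponding triangle of $K_1 + H$ (namely itself, or the triangle obtained by replacing $v_i$ with $v$). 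Since $\chi$ has no monochromatic triangle, neither does $\chi'$, contradicting $\overline{K}_s + H \arrowse (3, 3)$.

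The only real obstacle is recognising the blow-up identification between colorings of $K_1 + H$ and of $\overline{K}_s + H$; once this is in place all the verifications are formal. A small bookkeeping point worth checking is that the coloring $\chi'$ is well defined on edges within $H$ (there is no conflict because those edges are common to both graphs) and that every triangle in $\overline{K}_s + H$ uses at most one vertex of $\overline{K}_s$, which is immediate from the independence of $\{v_1, \dots, v_s\}$.
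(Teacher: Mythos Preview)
Your proof is correct and follows exactly the route the paper sketches in the paragraph preceding the proposition: $G$ sits inside $\overline{K}_s + H$, so the latter arrows $(3,3)$, and then the blow-up identification you describe is precisely the ``easy to see'' passage from $\overline{K}_s + H$ to $K_1 + H$. You have simply supplied the details the paper leaves implicit, including the clique-number and vertex-count checks.
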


With the help of Proposition \ref{proposition: K_1 + H in mH_e(3, 3; 5; n - abs(A) + 1)} in \cite{RX07} the authors prove that every graph in $\mH_e(3, 3; 4; 18)$ can be obtained by adding 4 independent vertices to a 14-vertex graph $H$ such that $K_1 + H \in \mH_e(3, 3; 5; 15)$. All 659 graphs in $\mH_e(3, 3; 5; 15)$ were found in \cite{PRU99}. With the help of a computer it was proved that, by extending such 14-vertex graphs $H$ with 4 independent vertices, it is not possible to obtain a graph in $\mH_e(3, 3; 4; 18)$, i.e. $\mH_e(3, 3; 4; 18) = \emptyset$ and $F_e(3, 3; 4) \geq 19$.

This method is not suitable for proving the Main Theorem, because not all graphs in $\mH_e(3, 3; 5; 16)$ are known and their number is too large. Because of this, we first prove that if $\G \in \mH_e(3, 3; 4; 19)$, then $G$ can be obtained by adding 4 independent vertices to some of 1 139 033 appropriately selected 15-vertex graphs, or by adding 5 independent vertices to some of 113 14-vertex graphs known from \cite{PRU99}. With the help of a computer we check that these extensions do not lead to the construction of a graph in $\mH_e(3, 3; 4; 19)$ and we derive that $\mH_e(3, 3; 4; 19) = \emptyset$ and $F_e(3, 3; 4) \geq 20$. The computer algorithm which we use (Algorithm \ref{algorithm: finding mL_(max)(n; p; s)}), just as the algorithm used in \cite{RX07} to prove that $F_e(3, 3; 4) \geq 19$, is based on some ideas of Algorithm $A_1$ from \cite{PRU99}.

For convenience, we will use the following notations:

$\mL(n; p) = \set{G : \abs{\V(G)} = n, \omega(G) < 4 \mbox{ and } K_p + G \arrowse (3, 3)}$

$\mL(n; p; s) = \set{G \in \mL(n; p) : \alpha(G) = s}$

From $R(3, 4) = 9$ it follows that
\begin{equation}
\label{equation: mL(n; p; s) = emptyset, if n geq 9 and s leq 2}
\mL(n; p; s) = \emptyset, \mbox{ if $n \geq 9$ and $s \leq 2$, }
\end{equation}

and from $R(4, 4) = 18$ it follows that
\begin{equation}
\label{equation: mL(n; p; s) = emptyset, if n geq 18 and s leq 3}
\mL(n; p; s) = \emptyset, \mbox{ if $n \geq 18$ and $s \leq 3$. }
\end{equation}

In \cite{PRU99} it is proved that $\mL(n; 1) \neq \emptyset$ if and only if $n \geq 14$, and all 153 graphs in $\mL(14; 1)$ are found. Later, we will use the following fact:
\begin{equation}
\label{equation: mL(14; 1; s) neq emptyset Leftrightarrow s in set(4, 5, 6, 7)}
\mL(14; 1; s) \neq \emptyset \Leftrightarrow s \in \set{4, 5, 6, 7}, \cite{PRU99}.
\end{equation}

From (\ref{equation: G arrowse (3, 3) Rightarrow chi(G) geq 6}) it follows
\begin{equation}
\label{equation: G in mL(n; p) Rightarrow chi(G) geq 6 - p}
G \in \mL(n; p) \Rightarrow \chi(G) \geq 6 - p.
\end{equation}

Obviously, $\mH_e(3, 3; 4; n) = \mL(n; 0)$. Let $G \in \mH_e(3, 3; 4)$. From the equality $F_e(3, 3; 5) = 15$ \cite{PRU99} and Proposition \ref{proposition: K_1 + H in mH_e(3, 3; 5; n - abs(A) + 1)} it follows that either $\abs{\V(G)} \geq 20$ or $\alpha(G) \leq 5$, and from $R(4, 4) = 18$ it follows that $\alpha(G) \geq 4$. Thus, we obtain
\begin{equation}
\label{equation: mH(3, 3; 4; 19) = mL(19; 0) = mL(19; 0; 4) cup mL(19; 0; 5)}
\mH_e(3, 3; 4; 19) = \mL(19; 0) = \mL(19; 0; 4) \cup \mL(19; 0; 5).
\end{equation}

We will need the following proposition, which follows easily from Proposition \ref{proposition: K_1 + H in mH_e(3, 3; 5; n - abs(A) + 1)}:
\begin{proposition}
\label{proposition: H in mL(n - abs(A), p + 1)}
Let $G \in \mL(n; p)$, $A \subseteq \V(G)$ be an independent set of vertices of $G$ and $H = G - A$. Then $H \in \mL(n - \abs{A}; p + 1)$.
\end{proposition}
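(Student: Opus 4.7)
My plan is to verify the three defining conditions of $\mL(n-\abs{A}; p+1)$ for $H$ in turn. Two of them are immediate: $\abs{\V(H)} = n - \abs{A}$ by definition of $H = G - A$, and $\omega(H) \leq \omega(G) < 4$ because $H$ is an induced subgraph of $G$. All the substance is in establishing $K_{p+1} + H \arrowse (3, 3)$, and this is exactly the natural generalization of Proposition \ref{proposition: K_1 + H in mH_e(3, 3; 5; n - abs(A) + 1)} from the $p = 0$ case.

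I would argue by contradiction. Suppose some $2$-coloring $\chi$ of the edges of $K_{p+1} + H$ contains no monochromatic triangle. Write the vertices of the $K_{p+1}$ as $\set{u_0, u_1, \dots, u_p}$ and those of the $K_p$ in $K_p + G$ as $\set{v_1, \dots, v_p}$. Define a map $\varphi : \V(K_p + G) \to \V(K_{p+1} + H)$ by $\varphi(v_i) = u_i$ for $1 \leq i \leq p$, $\varphi(a) = u_0$ for every $a \in A$, and $\varphi(h) = h$ for every $h \in \V(H)$. A short case check (edges inside $K_p$, edges inside $H$, edges between $K_p$ and $\V(G)$, and edges of $G$ between $A$ and $\V(H)$) shows that $\varphi$ sends every edge of $K_p + G$ to an edge of $K_{p+1} + H$. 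I would then pull back $\chi$ along $\varphi$ to obtain a $2$-coloring of the edges of $K_p + G$.

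The crucial observation is that, because $A$ is independent in $G$, no edge of $K_p + G$ joins two vertices of $A$; therefore every triangle of $K_p + G$ contains at most one vertex of $A$, and $\varphi$ is injective on the vertex set of any triangle. Consequently $\varphi$ carries each triangle of $K_p + G$ to a genuine triangle of $K_{p+1} + H$ with the same three edge colors, so the pulled-back coloring is monochromatic-triangle-free. This contradicts $G \in \mL(n; p)$, which by definition says $K_p + G \arrowse (3, 3)$.

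The only real step to check carefully is the claim that $\varphi$ is injective on the vertices of any triangle and simultaneously maps edges to edges — both points reduce to the independence of $A$ in $G$ together with the definition of the join. The case $p = 0$ recovers precisely Proposition \ref{proposition: K_1 + H in mH_e(3, 3; 5; n - abs(A) + 1)}, which is the sense in which, as the text says, the present statement follows easily from that one.
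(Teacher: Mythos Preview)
Your proof is correct and is essentially the same argument the paper has in mind: the paper derives Proposition~\ref{proposition: H in mL(n - abs(A), p + 1)} from Proposition~\ref{proposition: K_1 + H in mH_e(3, 3; 5; n - abs(A) + 1)} by applying the latter's reasoning to $K_p + G$ (with the same independent set $A$, noting $G' - A = K_p + H$), which amounts to exactly the homomorphism $\varphi$ you construct explicitly. Your direct presentation via $\varphi$ is a clean way to spell out the ``follows easily'' step.
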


We denote by $\mL_{max}(n; p; s)$ the set of all maximal $K_4$-free graphs in $\mL(n; p; s)$, i.e. the graphs $G \in \mL(n; p; s)$ for which $\omega(G + e) = 4$ for every $e \in \E(\overline{G})$. Since every graph in $\mL(19; 0)$ is contained in a maximal $K_4$-free graph, according to (\ref{equation: mH(3, 3; 4; 19) = mL(19; 0) = mL(19; 0; 4) cup mL(19; 0; 5)}) to prove the Main Theorem it is enough to prove that $\mL_{max}(19; 0; 4) = \emptyset$ and $\mL_{max}(19; 0; 5) = \emptyset$. In the proofs of these inequalities we will use Algorithm \ref{algorithm: finding mL_(max)(n; p; s)}, formulated below. 

The graph $G$ is called a $(+K_3)$-graph if $G + e$ contains a new $3$-clique for every $e \in \E(\overline{G})$. We denote by $\mL_{+K_3}(n; p; s)$ the set of all $(+K_3)$-graphs in $\mL(n; p; s)$.

Let $G \in \mL_{max}(n; p; s)$. Let $A \subseteq \V(G)$ be an independent set of vertices of $G$, $\abs{A} = s$ and $H = G - A$. According to Proposition \ref{proposition: H in mL(n - abs(A), p + 1)}, $H \in \mL(n - s, p + 1)$. From $\omega(G + e) = 4, \forall e \in \E(\overline{G})$ it follows that $H$ is $(+K_3)$-graph, and from $\alpha(G) = s$ it follows that $\alpha(H) \leq s$. Therefore, $H \in \mL_{+K_3}(n - s; p + 1; s')$ for some $s' \leq s$. Thus, we proved the following
\begin{proposition}
\label{proposition: H in bigcup(s' leq s)mL_(+K_3)(n - s; p + 1; s')}
Let $G \in \mL_{max}(n; p; s)$. Let $A \subseteq \V(G)$ be an independent set of vertices of $G$, $\abs{A} = s$ and $H = G - A$. Then,

$H \in \bigcup_{s' \leq s}\mL_{+K_3}(n - s; p + 1; s')$.
\end{proposition}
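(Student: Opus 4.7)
The plan is to unpack the three conditions defining membership in $\mL_{+K_3}(n - s; p + 1; s')$ (for some $s' \leq s$) and verify each one in turn for $H = G - A$: namely that $H \in \mL(n - s; p + 1)$, that $H$ is a $(+K_3)$-graph, and that $\alpha(H) = s' \leq s$.

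Two of the three conditions come essentially for free. For the first, I would invoke Proposition \ref{proposition: H in mL(n - abs(A), p + 1)} applied to the independent set $A$ of size $s$; this gives $H \in \mL(n - s; p + 1)$ immediately, recording that $\omega(H) < 4$ and $K_{p+1} + H \arrowse (3, 3)$. For the third, since $H$ is an induced subgraph of $G$, every independent set of $H$ is also independent in $G$, so $\alpha(H) \leq \alpha(G) = s$; one then simply sets $s' := \alpha(H)$.

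The main step is the $(+K_3)$-condition, and this is where the maximality hypothesis $G \in \mL_{max}(n; p; s)$ enters. I would take any non-edge $e = [u, v]$ of $H$, with $u, v \in \V(G) \setminus A$. Since $A$ is independent and $u, v \notin A$, this $e$ is also a non-edge of $G$. By the maximality of $G$ as a $K_4$-free graph with the Folkman property, $G + e$ contains a $K_4$, and this $K_4$ necessarily uses the edge $e$. Let $x, y$ be its two remaining vertices; because $x$ and $y$ are adjacent while $A$ is independent, at least one of them — say $y$ — must lie in $\V(H)$. Then $\set{u, v, y}$ is a triangle in $H + e$ that contains the newly added edge $e$, and therefore is a triangle that was not present in $H$. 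This shows $H$ is a $(+K_3)$-graph.

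I do not anticipate a genuine obstacle here; the only subtlety requiring care is keeping track of \emph{which} vertex of the newly created $K_4$ in $G + e$ survives when we pass back to $H + e$, and it is precisely the independence of $A$ that prevents both of the extra vertices from lying in $A$ and thus guarantees that a new triangle of $H + e$ can be extracted.
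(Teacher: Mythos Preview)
Your proposal is correct and follows exactly the same route as the paper: invoke Proposition~\ref{proposition: H in mL(n - abs(A), p + 1)} for $H \in \mL(n-s; p+1)$, use $\alpha(H) \leq \alpha(G) = s$, and derive the $(+K_3)$-property of $H$ from the maximality of $G$. The paper states the last step in one line without justification, whereas you spell out carefully why the new $K_4$ in $G+e$ yields a new triangle in $H+e$ (using that $A$ is independent so both extra vertices of the $K_4$ cannot lie in $A$); this is a welcome elaboration but not a different argument.
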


The graph $G$ is called a Sperner graph if $N_G(u) \subseteq N_G(v)$ for some pair of vertices $u, v \in \V(G)$. If $G \in \mL(n; p; s)$ and $N_G(u) \subseteq N_G(v)$, then $G - u \in \mL(n - 1; p; s')$, $s - 1 \leq s' \leq s$. Therefore, every Sperner graph $G \in \mL(n; p; s)$ is obtained by adding one vertex to some graph $H \in \mL(n - 1; p; s')$, $s - 1 \leq s' \leq s$. In the special case, when $G$ is a Sperner graph and $G \in \mL_{max}(n; p; s)$, from $N_G(u) \subseteq N_G(v)$ it follows that $N_G(u) = N_G(v)$, and therefore $G - u \in \mL_{max}(n - 1; p; s')$, $s - 1 \leq s' \leq s$, i.e. $G$ is obtained by duplicating a vertex in some graph $H \in \mL_{max}(n - 1; p; s')$. All non-Sperner graphs in $\mL_{max}(n; p; s)$ are obtained with the help of the following algorithm, which is based on Proposition \ref{proposition: H in bigcup(s' leq s)mL_(+K_3)(n - s; p + 1; s')}:

\begin{algorithm}
\label{algorithm: finding mL_(max)(n; p; s)}
Finding all non-Sperner graphs in $\mL_{max}(n; p; s)$ for fixed $n$, $p$ and $s$.
	
\emph{1.} Find the set of graphs $\mathcal{A} = \bigcup_{s' \leq s}\mL_{+K_3}(n - s; p + 1; s')$. The obtained graphs in $\mL_{max}(n; p; s)$ will be output in $\mathcal{B}$, let $\mathcal{B} = \emptyset$.

\emph{2.} For each graph $H \in \mathcal{A}$:

\emph{2.1.} Find the family $\mathcal{M}(H) = \set{M_1, ..., M_t}$ of all maximal $K_3$-free subsets of $\V(H)$.

\emph{2.2.} Find all $s$-element subsets $N = \set{M_{i_1}, M_{i_2}, ..., M_{i_s}}$ of $\mathcal{M}(H)$, which fulfill the conditions:

(a) $M_{i_j} \neq N_H(v)$ for every $v \in \V(H)$ and for every $M_{i_j} \in N$.

(b) $K_2 \subseteq H[M_{i_j} \cap M_{i_k}]$ for every $M_{i_j}, M_{i_k} \in N$.

(c) If $N' \subseteq N$, then $\alpha(H - \bigcup_{M_{i_j} \in N'} M_{i_j}) \leq s - \abs{N'}$.

\emph{2.3.} For each $s$-element subset $N = \set{M_{i_1}, M_{i_2}, ..., M_{i_s}}$ of $\mathcal{M}(H)$ found in step 2.2 construct the graph $G = G(N)$ by adding new independent vertices $v_1, v_2, ..., v_s$ to $\V(H)$ such that $N_G(v_j) = M_{i_j}, j = 1, ..., s$. If $G$ is not a Sperner graph and $\omega(G + e) = 4, \forall e \in \E(\overline{G})$, then add $G$ to $\mathcal{B}$.

\emph{3.} Remove the isomorph copies of graphs from $\mathcal{B}$.

\emph{4.} Remove from $\mathcal{B}$ all graphs with chromatic number less than $6 - p$.
	
\emph{5.} Remove from $\mathcal{B}$ all graphs $G$ for which $K_p + G \not\arrowse (3, 3)$.
\end{algorithm}

To avoid repetition in the proofs of Theorem \ref{theorem: finding mL_(max)(n; p; s)} and Theorem \ref{theorem: 20 leq F_v(2, 3, 3; 4) leq 24} we formulate the following
\begin{lemma}
\label{lemma: finding max supgraphs}
After the execution of step 2.3 of Algorithm \ref{algorithm: finding mL_(max)(n; p; s)}, the obtained set $\mathcal{B}$ coincides with the set of all maximal $K_4$-free non-Sperner graphs with independence number $s$ which have an independent set of vertices $A \subseteq \V(G), \abs{A} = s$ such that $G - A \in \mathcal{A}$.
\end{lemma}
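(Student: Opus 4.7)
The claim is a double inclusion $\mathcal{B}=\mathcal{S}$, where $\mathcal{S}$ denotes the set of maximal $K_4$-free non-Sperner graphs $G$ with $\alpha(G)=s$ admitting an independent $s$-set $A\subseteq V(G)$ such that $G-A\in\mathcal{A}$. My plan is to verify each inclusion by tracing how the conditions imposed in steps 2.2--2.3 correspond, one by one, to the defining properties of $\mathcal{S}$.

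For the inclusion $\mathcal{B}\subseteq\mathcal{S}$, a graph $G\in\mathcal{B}$ arises from some $H\in\mathcal{A}$ and an $s$-tuple of maximal $K_3$-free sets $M_{i_1},\dots,M_{i_s}\in\mathcal{M}(H)$ satisfying (a)--(c), together with new independent vertices $v_1,\dots,v_s$ whose neighborhoods in $G$ are $N_G(v_j)=M_{i_j}$. I would first observe that $G$ is $K_4$-free: a $K_4$ cannot use two of the $v_j$'s, and using exactly one would force a triangle inside $M_{i_j}$, which is $K_3$-free, while a $K_4$ using none would have to lie in the $K_4$-free graph $H$. The non-Sperner and maximal $K_4$-free properties are enforced explicitly by the step 2.3 test. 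It then remains to check $\alpha(G)=s$: the set $A=\{v_1,\dots,v_s\}$ gives $\alpha(G)\ge s$, while for any independent $I\subseteq V(G)$ setting $N'=\{M_{i_j}:v_j\in I\}$ places $I\cap V(H)$ inside $H-\bigcup_{M\in N'}M$, so (c) yields $|I|\le |N'|+(s-|N'|)=s$.

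For $\mathcal{S}\subseteq\mathcal{B}$, given $G\in\mathcal{S}$ with witness $A=\{v_1,\dots,v_s\}$ and $H=G-A\in\mathcal{A}$, I would set $M_{i_j}:=N_G(v_j)\subseteq V(H)$. Each $M_{i_j}$ is $K_3$-free because $\{v_j\}\cup M_{i_j}$ would otherwise contain a $K_4$ in $G$; its \emph{maximality} as a $K_3$-free subset of $V(H)$ follows from the maximal $K_4$-freeness of $G$, since for any $w\in V(H)\setminus M_{i_j}$ the edge $[v_j,w]$ would create a $K_4$ whose triangle off $v_j$ lies in $(M_{i_j}\cap N_H(w))\cup\{w\}$, producing a triangle in $H$ that extends $M_{i_j}$ by $w$. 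Thus $M_{i_j}\in\mathcal{M}(H)$. Condition (a) corresponds to non-Spernerness applied to the pair $(v_j,v)$: if $M_{i_j}=N_H(v)$ then $N_G(v_j)\subseteq N_G(v)$; (b) follows from applying maximality of $G$ to the non-edge $[v_j,v_k]$, whose insertion must produce a $K_4$ of the form $\{v_j,v_k,x,y\}$ with $\{x,y\}\subseteq M_{i_j}\cap M_{i_k}$ an edge; (c) follows because, letting $A'=\{v_j:M_{i_j}\in N'\}$, any independent $I$ in $H-\bigcup_{M\in N'}M$ combines with $A'$ to yield an independent set of $G$, so $|I|+|A'|\le\alpha(G)=s$. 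Consequently $G$ is one of the graphs constructed in step 2.3, and being non-Sperner and maximal $K_4$-free it passes the final test and enters $\mathcal{B}$.

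The main obstacle here is not logical depth but the matching of every algorithmic condition to one, and only one, defining property of $\mathcal{S}$ so that both inclusions close up symmetrically. The most delicate point is checking that $M_{i_j}=N_G(v_j)$ is not merely $K_3$-free but \emph{maximal} $K_3$-free in $V(H)$, since otherwise it would not appear in the family $\mathcal{M}(H)$ enumerated in step 2.1; this is precisely where the maximal $K_4$-free hypothesis on $G$ is genuinely used.
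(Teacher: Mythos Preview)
Your proof is correct and follows essentially the same approach as the paper's: both directions are established by matching the conditions (a)--(c) of step~2.2 and the explicit tests in step~2.3 against the defining properties of the target class. Your version is in fact more explicit than the paper's in several places---notably in spelling out why $N_G(v_j)$ is a \emph{maximal} $K_3$-free subset of $V(H)$ and why condition~(b) follows from maximality applied to the non-edge $[v_j,v_k]$---whereas the paper states these without justification. One small point you leave implicit is that the $M_{i_j}=N_G(v_j)$ are pairwise distinct (so that $N$ really is an $s$-element subset of $\mathcal{M}(H)$); this follows immediately from non-Spernerness, since $M_{i_j}=M_{i_k}$ would give $N_G(v_j)=N_G(v_k)$.
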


\begin{proof}
Suppose that in step 2.3 of Algorithm \ref{algorithm: finding mL_(max)(n; p; s)} the graph $G$ is added to $\mathcal{B}$. Let $G = G(N)$ where $N$ and the following notations are the same as in step 2.3. By $H \in \mathcal{A}$, we have $\omega(H) < 4$. Since $N_G(v_j), j = 1, ..., s$, are $K_3$-free sets, it follows that $\omega(G) < 4$. By $H \in \mathcal{A}$, we have $\alpha(H) \leq s$. From this fact and the condition (c) in step 2.2 it follows that $\alpha(G) \leq s$. Since $\set{v_1, ..., v_s}$ is an independent set of vertices of $G$, we have $\alpha(G) = s$ and $G - \set{v_1, ..., v_s} = H \in \mathcal{A}$. The two checks at the end of step 2.3 guarantee that $G$ is a maximal $K_4$-free non-Sperner graph. 

Let $G$ be a maximal $K_4$-free non-Sperner graph with independence number $s$ and $A = \set{v_1, ..., v_s}$ is an independent set of vertices of $G$ such that $H = G - A \in \mathcal{A}$. We will prove that, after the execution of step 2.3 of Algorithm \ref{algorithm: finding mL_(max)(n; p; s)}, $G \in \mathcal{B}$. Since $G$ is a maximal $K_4$-free graph, $N_G(v_i), i = 1, ..., s$, are maximal $K_3$-free subsets of $V(H)$, and therefore $N_G(v_i) \in \mathcal{M}(H), i = 1, ..., s$ (see step 2.1). Let $N = \set{N_G(v_1), ..., N_G(v_s)}$. Since $G$ is not a Sperner graph, $N$ is an $s$-element subset of $\mathcal{M}(H)$ and $N$ fulfills the condition (a) in step 2.2. From the maximality of $G$ it follows that $N$ fulfills the condition (b), and from $\alpha(G) = s$ it follows that $N$ also fulfills (c). Thus, we showed that $N$ fulfills all conditions in step 2.2, and since $G = G(N)$ is a maximal $K_4$-free non-Sperner graph, in step 2.3 $G$ is added to $\mathcal{B}$.
\end{proof}

\begin{theorem}
\label{theorem: finding mL_(max)(n; p; s)}
After the execution of Algorithm \ref{algorithm: finding mL_(max)(n; p; s)}, the obtained set $\mathcal{B}$ coincides with the set of all non-Sperner graphs in $\mL_{max}(n; p; s)$.
\end{theorem}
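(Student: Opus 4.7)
The plan is to reduce the theorem to Lemma \ref{lemma: finding max supgraphs} plus a short check that steps 3--5 implement the remaining membership conditions for $\mL_{max}(n; p; s)$. Writing $\mathcal{B}_{2.3}$ for the state of $\mathcal{B}$ at the moment step 2.3 finishes, Lemma \ref{lemma: finding max supgraphs} already identifies $\mathcal{B}_{2.3}$ with the set of all maximal $K_4$-free non-Sperner graphs $G$ on $n$ vertices with $\alpha(G) = s$ that admit an independent set $A \subseteq \V(G)$ of size $s$ with $G - A \in \mathcal{A} = \bigcup_{s' \leq s}\mL_{+K_3}(n - s; p + 1; s')$. So the only thing left to account for is the condition $K_p + G \arrowse (3, 3)$ coming from the definition of $\mL(n; p)$.

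For the inclusion $\mathcal{B} \subseteq \mL_{max}(n; p; s)$, any $G$ surviving steps 3--5 has, by Lemma \ref{lemma: finding max supgraphs}, all the required structural properties (maximal $K_4$-free, $\abs{\V(G)} = n$, $\alpha(G) = s$, non-Sperner), and step 5 explicitly enforces $K_p + G \arrowse (3, 3)$. Hence $G \in \mL(n; p; s)$, and from its maximality as a $K_4$-free graph we conclude $G \in \mL_{max}(n; p; s)$. Step 3 only deletes isomorphic duplicates, and step 4 only removes graphs with $\chi(G) < 6 - p$, which by (\ref{equation: G in mL(n; p) Rightarrow chi(G) geq 6 - p}) are in any case excluded by step 5, so neither step discards a legitimate output.

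For the reverse inclusion, let $G$ be any non-Sperner graph in $\mL_{max}(n; p; s)$. Since $\alpha(G) = s$, choose an independent set $A \subseteq \V(G)$ with $\abs{A} = s$. Proposition \ref{proposition: H in bigcup(s' leq s)mL_(+K_3)(n - s; p + 1; s')} gives $G - A \in \mathcal{A}$, so by Lemma \ref{lemma: finding max supgraphs} the graph $G$ is added to $\mathcal{B}$ during step 2.3. Neither step 4 nor step 5 subsequently removes it: $G \in \mL(n; p; s)$ directly yields $K_p + G \arrowse (3, 3)$ and, via (\ref{equation: G in mL(n; p) Rightarrow chi(G) geq 6 - p}), also $\chi(G) \geq 6 - p$. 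Thus $G \in \mathcal{B}$. The substantive work is already contained in Lemma \ref{lemma: finding max supgraphs}; the only mild point to watch is that step 4 is redundant with step 5 and is included purely as an efficiency filter, so it cannot discard anything we want to keep.
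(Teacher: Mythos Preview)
Your proof is correct and follows essentially the same route as the paper: both directions are reduced to Lemma~\ref{lemma: finding max supgraphs} together with Proposition~\ref{proposition: H in bigcup(s' leq s)mL_(+K_3)(n - s; p + 1; s')}, and the survival through steps~4 and~5 is handled via (\ref{equation: G in mL(n; p) Rightarrow chi(G) geq 6 - p}) and the definition of $\mL(n;p)$. Your explicit remarks that step~3 only removes isomorphic duplicates and that step~4 is a redundant efficiency filter are not in the paper's proof but are harmless clarifications.
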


\begin{proof}
Suppose that, after the execution of Algorithm \ref{algorithm: finding mL_(max)(n; p; s)}, $G \in \mathcal{B}$. According to Lemma \ref{lemma: finding max supgraphs}, $G$ is a maximal $K_4$-free non-Sperner graph and $\alpha(G) = s$. Now, from step 5 it follows that $G \in \mL_{max}(n; p; s)$. 

Conversely, let $G$ be an arbitrary non-Sperner graph in $\mL_{max}(n; p; s)$. Let $A \subseteq \V(G)$ be an independent set of vertices of $G$, $\abs{A} = s$ and $H = G - A$. According to \ref{proposition: H in bigcup(s' leq s)mL_(+K_3)(n - s; p + 1; s')}, $H \in \mathcal{A}$. Now, from Lemma \ref{lemma: finding max supgraphs} we obtain that, after the execution of step 2.3, the graph $G$ is included in the set $\mathcal{B}$. By (\ref{equation: G in mL(n; p) Rightarrow chi(G) geq 6 - p}), after the execution of step 4, $G$ remains in $\mathcal{B}$. It is clear that after step 5, $G$ also remains in $\mathcal{B}$.
\end{proof}

\begin{remark}
\label{remark: delta(G) geq 8}
In \cite{Bik16} it is proved that if $G \in \mH_e(3, 3; 4)$ and $G - e \not\arrowse (3, 3), \forall e \in \E(G)$, then $\delta(G) \geq 8$. Since $\mL(18; 0) = \emptyset$, it follows easily that for each graph $G \in \mL(19, 0)$ we have $\delta(G) \geq 8$. Using this result we can improve Algorithm \ref{algorithm: finding mL_(max)(n; p; s)} in the case $n = 19, p = 0$ in the following way:

1. In step 1 we remove from the set $\mathcal{A}$ the graphs with minimum degree less than 8 - s.

2. In step 2.2 we add the following conditions for the subset $N$:

(d) $\abs{M_{i_j}} \geq 8$ for every $M_{i_j} \in N$.

(e) If $N' \subseteq N$, then $d_H(v) \geq 8 - s + \abs{N'}$ for every $v \not\in \bigcup_{M_{i_j} \in N'} M_{i_j}$.

This way it is guaranteed that in step 2.3 only graphs $G$ for which $\delta(G) \geq 8$ are added to the set $\mathcal{B}$.

Let us note, however, that in the proof of Theorem \ref{theorem: 20 leq F_v(2, 3, 3; 4) leq 24}, Algorithm \ref{algorithm: finding mL_(max)(n; p; s)} must definitely be used without these changes.

\end{remark}

The \emph{nauty} programs \cite{MP13} have an important role in this paper. We use them for fast generation of non-isomorphic graphs and for graph isomorph rejection.

\section{Proof of the Main Theorem}

According to (\ref{equation: mH(3, 3; 4; 19) = mL(19; 0) = mL(19; 0; 4) cup mL(19; 0; 5)}) it is enough to prove that $\mL_{max}(19; 0; 4) = \emptyset$ and $\mL_{max}(19; 0; 5) = \emptyset$.

\textit{1. Proof of $\mL_{max}(19; 0; 4) = \emptyset$.}

We generate all 11-vertex graphs with a computer and among them we find all 362 439 graphs in $\mL_{+K_3}(11; 2; 3)$ and all 7 949 015 graphs in $\mL_{+K_3}(11; 2; 4)$. Let us denote $\mathcal{A}_1 = \mL_{+K_3}(11; 2; 3) \cup \mL_{+K_3}(11; 2; 4)$. By (\ref{equation: mL(n; p; s) = emptyset, if n geq 9 and s leq 2}),

$\mathcal{A}_1 = \bigcup_{s' \leq 4} \mL_{+K_3}(11; 2; s')$.

We execute Algorithm \ref{algorithm: finding mL_(max)(n; p; s)} with $n = 15$, $p = 1$, $s = 4$, having $\mathcal{A} = \mathcal{A}_1$ on the first step. According to \ref{theorem: finding mL_(max)(n; p; s)}, we find all 5750 non-Sperner graphs in $\mL_{max}(15; 1; 4)$. Among the graphs in $\mL(14; 1)$, which are known from \cite{PRU99}, there are 8 maximal $K_4$-free graphs and they all have independence number 4. By adding a new vertex to each of these 8 graphs which duplicates some of their vertices, we obtain all 20 non-isomorphic Sperner graphs in $\mL_{max}(15; 1; 4)$ (see the text before Algorithm \ref{algorithm: finding mL_(max)(n; p; s)}). Thus, all 5770 graphs in $\mL_{max}(15; 1; 4)$ are obtained. All graphs $G$ for which $\omega(G) < 4$ and $\alpha(G) < 4$ are known and can be found in \cite{McK_r}. There are 640 such 15-vertex graphs, among which we find the only 2 graphs in $\mL_{max}(15; 1; 3)$. From (\ref{equation: mL(n; p; s) = emptyset, if n geq 9 and s leq 2}) it follows that every graph from the set $\mathcal{A}_2 = \mL_{+K_3}(15; 1; 3) \cup \mL_{+K_3}(15; 1; 4)$ is a subgraph of some graph in $\mL_{max}(15; 1; 3)$ or in $\mL_{max}(15; 1; 4)$. By removing edges from the graphs in $\mL_{max}(15; 1; 4) \cup \mL_{max}(15; 1; 3)$ we obtain all graphs in $\mathcal{A}_2$ (1 139 028 graphs in $\mL_{+K_3}(15; 1; 4)$ and 5 graphs in $\mL_{+K_3}(15; 1; 3)$). By (\ref{equation: mL(n; p; s) = emptyset, if n geq 9 and s leq 2}),

$\mathcal{A}_2 = \bigcup_{s' \leq 4} \mL_{+K_3}(15; 1; s')$.

We execute Algorithm \ref{algorithm: finding mL_(max)(n; p; s)} with $n = 19$, $p = 0$, $s = 4$, having $\mathcal{A} = \mathcal{A}_2$ on the first step. In step 2.3, 2 551 314 graphs are added to the set $\mathcal{B}$, 2 480 352 of which remain in $\mathcal{B}$ after the isomorph rejection in step 3. After step 4, 2 597 graphs with chromatic number 6 remain in $\mathcal{B}$. In the end, after executing step 5 we obtain $\mathcal{B} = \emptyset$. From $F_e(3, 3; 4) \geq 19$ it follows that in $\mL(19, 0)$ there are no Sperner graphs and by Theorem \ref{theorem: finding mL_(max)(n; p; s)} we obtain $\mL_{max}(19; 0; 4) = \emptyset$.\\

\textit{2. Proof of $\mL_{max}(19; 0; 5) = \emptyset$.}

From the graphs in $\mL(14; 1)$, which are known from \cite{PRU99}, with the help of a computer we find all 85 graphs in $\mL_{+K_3}(14; 1; 4)$ and all 28 graphs in $\mL_{+K_3}(14; 1; 5)$. Let us denote $\mathcal{A}_3 = \mL_{+K_3}(14; 1; 4) \cup \mL_{+K_3}(14; 1; 5)$. By (\ref{equation: mL(14; 1; s) neq emptyset Leftrightarrow s in set(4, 5, 6, 7)}),

$\mathcal{A}_3 = \bigcup_{s' \leq 5} \mL_{+K_3}(14; 1; s')$.

We execute Algorithm \ref{algorithm: finding mL_(max)(n; p; s)} with $n = 19$, $p = 0$, $s = 5$, having $\mathcal{A} = \mathcal{A}_3$ on the first step. In step 2.3, 502 901 graphs are added to the set $\mathcal{B}$, 251 244 of which remain in $\mathcal{B}$ after the isomorph rejection in step 3. After step 4, 31 graphs with chromatic number 6 remain in $\mathcal{B}$. In the end, after executing step 5 we obtain $\mathcal{B} = \emptyset$. Since there are no Sperner graphs in $\mL(19; 0)$, by  Theorem \ref{theorem: finding mL_(max)(n; p; s)} we obtain $\mL_{max}(19; 0; 5) = \emptyset$. \qed\\

All computations were done on a personal computer. Using one processing core, the time needed to execute Algorithm \ref{algorithm: finding mL_(max)(n; p; s)} in the case $n = 19$, $p = 0$, $s = 4$ is about one hour, and in the case $n = 19$, $p = 0$, $s = 5$ it is about half a minute. Note that in the first case among the 2597 6-chromatic graphs obtained after step 4 of Algorithm \ref{algorithm: finding mL_(max)(n; p; s)}, 794 have a minimum degree of 8 or more. In the second case, the total number of 6-chromatic graphs is 31, 11 of which have a minimum degree of 8 or more. Using the improvements of Algorithm \ref{algorithm: finding mL_(max)(n; p; s)} described in Remark \ref{remark: delta(G) geq 8}, the time needed for computations is reduced almost 2 times in the first case and more than 10 times in the second case.

\begin{remark}
\label{remark: mL_(max)(19; 0; 5) = emptyset}
Let us note that the result $\mL_{max}(19; 0; 5) = \emptyset$ can also be obtained with the algorithm from \cite{RX07}, but more computations have to be performed by the computer.
\end{remark}

\section{New bounds on the vertex Folkman number $F_v(2, 3, 3; 4)$}

Let $G$ be a graph and $a_1, ..., a_s$ be positive integers. The expression $G \arrowsv (a_1, ..., a_s)$ means that in every coloring of $\V(G)$ in $s$ colors there exist $i \in \set{1, ..., s}$ such that there is a monochromatic $a_i$-clique of color $i$.

Define:

$\mH_v(a_1, ..., a_s; q) = \set{G : G \arrowsv (a_1, ..., a_s) \mbox{ and } \omega(G) < q}.$

$F_v(a_1, ..., a_s; q) = \min\set{\abs{\V(G)} : G \in \mH_v(a_1, ..., a_s; q)}.$

The numbers $F_v(a_1, ..., a_s; q)$ are called vertex Folkman numbers. The edge Folkman numbers $F_e(a_1, ..., a_s; q)$ are defined similarly.

Folkman \cite{Fol70} proved that $F_v(a_1, ..., a_s; q)$ exists if and only if $q > \max\set{a_1, ..., a_s}$. In this section, we consider the numbers $F_v(3, 3; q)$ and $F_v(2, 3, 3; q)$, which are related to the edge Folkman number $F_e(3, 3; 4)$. According to Folkman`s result, these two numbers exist if and only if $q \geq 4$.

All the numbers $F_v(3, 3; q)$ are known:
\begin{equation*}
F_v(3, 3; q) = \begin{cases}
5, & \emph{if $q \geq 6$ (obvious)}\\
8, & \emph{if $q = 5$}\\
14, & \emph{if $q = 4$, \cite{Nen81} and \cite{PRU99}}\\
\end{cases}
\end{equation*}

About the numbers $F_v(2, 3, 3; q)$ it is known that:
\begin{equation*}
F_v(2, 3, 3; q) = \begin{cases}
6, & \emph{if $q \geq 7$  (obvious)}\\
9, & \emph{if $q = 6$}\\
12, & \emph{if $q = 5$, \cite{Nen01}}\\
\end{cases}
\end{equation*}

The equalities $F_v(3, 3; 5) = 8$ and $F_v(2, 3, 3; 6) = 9$ are a special case of Theorem 3 от \cite{LRU01}.

The number $F_v(2, 3, 3; 4)$ is not known and its computation seems to be quite difficult. The following bounds are known:
\begin{equation}
\label{equation: 19 leq F_v(2, 3, 3; 4) leq 30}
19 \leq F_v(2, 3, 3; 4) \leq 30, \cite{SXL09} \mbox{ and } \cite{SLHX11}.
\end{equation}

Using the results from this paper (the graphs obtained in the proof of the Main Theorem), we improve the bounds in (\ref{equation: 19 leq F_v(2, 3, 3; 4) leq 30}) by proving the following
\begin{theorem}
\label{theorem: 20 leq F_v(2, 3, 3; 4) leq 24}
$20 \leq F_v(2, 3, 3; 4) \leq 24$.
\end{theorem}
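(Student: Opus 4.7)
I would prove the two inequalities separately.

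For the lower bound $F_v(2,3,3;4) \geq 20$, suppose for contradiction that $G \in \mH_v(2,3,3;4;19)$. Since $G \arrowsv (2,3,3)$ is preserved when edges are added on the same vertex set (a forbidden 3-partition of the supergraph is still a forbidden partition of $G$), I may assume $G$ is maximal $K_4$-free. Two quick structural facts frame the search. First, $G \arrowsv (2,3,3) \Rightarrow \chi(G) \geq 6$: a proper 5-coloring grouped as $1+2+2$ yields an independent class and two triangle-free classes, since a union of two independent sets is triangle-free. Second, for every independent set $I \subseteq \V(G)$, taking $V_1 = I$ in a hypothetical bad 3-partition forces $G - I \arrowsv (3,3)$, and the standard join argument then gives $K_1 + (G - I) \arrowse (3,3)$, i.e., $K_1 + (G - I) \in \mH_e(3,3;5)$. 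Thus $\abs{\V(G)} - \abs{I} + 1 \geq F_e(3,3;5) = 15$, forcing $\alpha(G) \leq 5$, which combined with $R(4,4) = 18$ pins $\alpha(G) \in \set{4,5}$.

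If $G$ is Sperner with $N_G(u) = N_G(v)$, placing $u$ in $v$'s class extends any good 3-partition of $\V(G - u)$ to $\V(G)$, so $G - u \arrowsv (2,3,3)$; this produces an 18-vertex element of $\mH_v(2,3,3;4)$, contradicting the known bound $F_v(2,3,3;4) \geq 19$ from \cite{SXL09}. Otherwise $G$ is non-Sperner; let $I$ be a maximum independent set and $H = G - I$. Maximality of $G$ forces $H$ to be a $(+K_3)$-graph, and the preceding paragraph gives $K_1 + H \arrowse (3,3)$ with $\omega(H) < 4$, so $H \in \mL_{+K_3}(19 - \alpha(G);\,1;\,s')$ for some $s' \leq \alpha(G)$. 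Hence $H$ belongs to precisely the set $\mathcal{A}_2$ (if $\alpha(G) = 4$) or $\mathcal{A}_3$ (if $\alpha(G) = 5$) built in Section 3, and $G$ is among the outputs of Algorithm \ref{algorithm: finding mL_(max)(n; p; s)} with $(n,p,s) = (19,0,4)$ or $(19,0,5)$. Per Remark \ref{remark: delta(G) geq 8}, I would run the algorithm in its general form, because the $\delta \geq 8$ shortcut relied on $G \arrowse (3,3)$ and need not apply to the vertex arrow. This reproduces the 2597 and 31 non-Sperner maximal $K_4$-free 6-chromatic candidates from Section 3, and a direct computer test verifies that none of them satisfies $G \arrowsv (2,3,3)$.

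For the upper bound $F_v(2,3,3;4) \leq 24$, I would exhibit an explicit 24-vertex $K_4$-free graph $G^*$ with $G^* \arrowsv (2,3,3)$, constructed from the graphs produced during the Main Theorem computations (for instance by suitably extending one of the intermediate 19-vertex maximal $K_4$-free candidates with 5 additional vertices and carefully chosen adjacencies), with $\omega(G^*) \leq 3$ and the arrow property certified by computer.

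The principal obstacle is computational. The test $G \arrowsv (2,3,3)$ is strictly heavier than the edge arrow test in step 5 of Algorithm \ref{algorithm: finding mL_(max)(n; p; s)}: it requires certifying, for every independent set $I \subseteq \V(G)$, that $\V(G) \setminus I$ admits no partition into two triangle-free sets. Implementing this efficiently across all 2628 non-Sperner candidates in the lower bound, and designing and certifying the 24-vertex witness in the upper bound, are where the real effort lies.
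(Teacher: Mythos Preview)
Your lower bound argument matches the paper's almost step for step: pass to a maximal $K_4$-free $G$, rule out the Sperner case via $\mH_v(2,3,3;4;18)=\emptyset$, use the arrow cascade $G\arrowsv(2,3,3)\Rightarrow G-I\arrowsv(3,3)\Rightarrow K_1+(G-I)\arrowse(3,3)$ to pin $\alpha(G)\in\{4,5\}$, and then locate $G$ among the $2597+31$ six-chromatic candidates already produced in Section~3. Your observation that one must run Algorithm~\ref{algorithm: finding mL_(max)(n; p; s)} without the $\delta\geq 8$ shortcut of Remark~\ref{remark: delta(G) geq 8} is exactly the caveat the paper records there. This half is fine.

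The gap is in the upper bound. ``Suitably extending one of the intermediate $19$-vertex maximal $K_4$-free candidates with $5$ additional vertices and carefully chosen adjacencies'' is not a workable search: the space of $K_4$-free extensions of a $19$-vertex graph by five new vertices is astronomically large, and nothing in the Main Theorem computations singles out a promising direction (indeed, those computations show that the $19$-vertex candidates themselves all \emph{fail} the vertex arrow). The paper does not try to build a witness this way. Instead it exploits an existing catalogue: all vertex-transitive graphs on at most $31$ vertices are known (Royle's database \cite{Roy_t}), and a direct scan finds a unique $24$-vertex transitive graph in $\mH_v(2,3,3;4)$, displayed as Figure~\ref{figure: fv233_trans_24}. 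The transitivity restriction is what makes the search finite and small; your proposal lacks any comparable structural filter, so as written it does not deliver the witness.
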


The interest in the number $F_v(2, 3, 3; 4)$ is motivated by our conjecture that the inequality $F_v(2, 3, 3; 4) \leq F_e(3, 3; 4)$ is true.

If this inequality holds, then there would be another possible way to prove a lower bound on the number $F_e(3, 3; 4)$ (the number $F_v(2, 3, 3; 4)$ is easier to bound than $F_e(3, 3; 4)$).

Later we will use the following obvious fact:
\begin{equation}
\label{equation: G arrowsv (2, 3, 3) Rightarrow chi(G) geq 6}
G \arrowsv (2, 3, 3) \Rightarrow \chi(G) \geq 6.
\end{equation}

Define:

$\mH_v(a_1, ..., a_s; q; n) = \set{G : G \in \mH_v(a_1, ..., a_s; q) \mbox{ and } \abs{\V(G)} = n}.$

Let $G \arrowsv (2, 3, 3)$ and $A \subseteq \V(G)$ be an independent set of vertices. Then, obviously, $G - A \arrowsv (3, 3)$. Therefore, the following proposition is true:
\begin{proposition}
\label{proposition: H in mH_v(3, 3; 4; n - abs(A))}
Let $G \in \mH_v(2, 3, 3; 4; n)$ and $A \subseteq \V(G)$ be an independent set of vertices of $G$. Then, $G - A \in \mH_v(3, 3; 4; n - \abs{A})$.
\end{proposition}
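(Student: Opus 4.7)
The plan is to verify the three defining conditions of $\mH_v(3,3;4;n-\abs{A})$ for the graph $G-A$: the vertex count, the clique-number bound, and the arrow property $G-A \arrowsv (3,3)$.

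First I would observe that $\abs{\V(G-A)} = n - \abs{A}$ holds by construction, and $\omega(G-A) \leq \omega(G) < 4$ since $G-A$ is an induced subgraph of $G \in \mH_v(2,3,3;4;n)$. These two facts require no further argument, so the content of the proposition is concentrated in the vertex-arrow relation.

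For the arrow property, the natural approach is to take an arbitrary $2$-coloring $\chi : \V(G-A) \to \set{2,3}$ and lift it to a $3$-coloring $\tilde{\chi} : \V(G) \to \set{1,2,3}$ by painting every vertex of $A$ with color $1$. Applying $G \arrowsv (2,3,3)$ to $\tilde{\chi}$ yields an $i \in \set{1,2,3}$ for which a monochromatic $a_i$-clique of color $i$ exists, with $(a_1,a_2,a_3) = (2,3,3)$. The case $i=1$ would produce an edge with both endpoints in $A$, contradicting the independence of $A$. Hence $i \in \set{2,3}$, and the resulting monochromatic $K_3$ has all its vertices in $\V(G) \setminus A = \V(G-A)$; under the original coloring $\chi$ this is a monochromatic triangle. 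Since $\chi$ was arbitrary, $G-A \arrowsv (3,3)$.

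There is essentially no obstacle here: the proposition is a routine unfolding of the vertex-arrow definition, which is exactly why the authors preface it with ``obviously''. The only subtle point to keep in mind is that the color assigned to $A$ must be the one whose clique requirement is $K_2$ (namely color $1$), so that the independence of $A$ immediately kills that color class and forces a monochromatic triangle in one of the other two colors, all of whose vertices survive the deletion of $A$.
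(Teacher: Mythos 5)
Your proof is correct and is exactly the standard argument the paper leaves implicit (the paper merely notes that $G - A \arrowsv (3,3)$ is ``obvious'' and records the statement as a proposition): color $A$ with the color whose requirement is $K_2$, use the independence of $A$ to rule that color out, and observe that the surviving monochromatic triangle lies in $G - A$. No discrepancy with the paper's reasoning.
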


It is easy to see that if $G \arrowsv (3, 3)$, then $K_1 + G \arrowse (3, 3)$. Thus, we derive
\begin{proposition}
\label{proposition: mH_v(3, 3; 4; n) subseteq mL(n; 1)}
$\mH_v(3, 3; 4; n) \subseteq \mL(n; 1)$.
\end{proposition}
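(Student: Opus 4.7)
The plan is to verify directly that any $G$ satisfying the defining conditions of $\mH_v(3,3;4;n)$ also satisfies those of $\mL(n;1)$. Two of the three conditions, namely $|\V(G)|=n$ and $\omega(G)<4$, carry over verbatim between the two definitions, so the entire argument reduces to showing that $G \arrowsv (3,3)$ implies $K_1+G \arrowse (3,3)$. This is exactly the assertion the paper labels as ``easy to see'' in the sentence immediately preceding the proposition, so the proof is effectively a one-step observation that I simply need to record carefully.

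The key idea is a colour-transfer from the apex of the $K_1$ summand to the vertices of $G$. I would fix an arbitrary 2-colouring of the edges of $K_1+G$ in red and blue, let $v$ denote the vertex of the $K_1$ summand, and then colour each $u\in \V(G)$ with the colour of the edge $uv$. This yields a 2-colouring of $\V(G)$, so by $G \arrowsv (3,3)$ there exist three pairwise adjacent vertices $u_1,u_2,u_3$ in $G$ whose induced vertex colours all agree; without loss of generality they are all red, which means the three edges $vu_1,vu_2,vu_3$ are red in the given edge-colouring of $K_1+G$.

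A trivial case split on the three edges of the triangle on $\{u_1,u_2,u_3\}$ inside $G$ then closes the argument: if at least one of them, say $u_1u_2$, is red, then $\{v,u_1,u_2\}$ induces a red monochromatic triangle in $K_1+G$; otherwise all three edges $u_1u_2$, $u_1u_3$, $u_2u_3$ are blue and $\{u_1,u_2,u_3\}$ is already a blue monochromatic triangle in $G\subseteq K_1+G$. In either subcase the chosen edge-colouring contains a monochromatic triangle, so $K_1+G\arrowse (3,3)$ and thus $G\in\mL(n;1)$. There is no real obstacle here; the only point requiring attention is the clean setup of the vertex colouring of $G$ induced by the colours of the edges incident to $v$.
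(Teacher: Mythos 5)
Your proof is correct and is exactly the argument the paper has in mind: the paper merely asserts that $G \arrowsv (3,3)$ implies $K_1+G \arrowse (3,3)$ is ``easy to see,'' and your colour-transfer from the apex vertex, followed by the two-case split on the triangle's internal edges, is the standard verification of that implication. Nothing further is needed.
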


\begin{table}
	\centering
	\resizebox{\textwidth}{!}
	{
		\begin{tabular}{ | l r | l r | l r | l r | l r | l r | l r | }
			\hline
			\multicolumn{2}{|c|}{\hbox to 2.2cm{$|\E(G)|$ \hfill $\#$}}&\multicolumn{2}{|c|}{\hbox to 2.2cm{$\delta(G)$ \hfill $\#$}}&\multicolumn{2}{|c|}{\hbox to 2.2cm{$\Delta(G)$ \hfill $\#$}}& \multicolumn{2}{|c|}{\hbox to 2.2cm{$\alpha(G)$ \hfill $\#$}}&\multicolumn{2}{|c|}{\hbox to 2.2cm{$|Aut(G)|$ \hfill $\#$}}\\
			\hline
			42			&  1		& 0			& 153		& 7			& 65		& 3			& 5			& 1			& 2 052 543	\\
			43			&  4		& 1			& 1 629		& 8			& 675 118	& 4			& 1 300 452	& 2			& 27 729	\\
			44			&  44		& 2			& 10 039	& 9			& 1 159 910	& 5			& 747 383	& 3			& 9			\\
			45			&  334		& 3			& 34 921	& 10		& 165 612	& 6			& 32 618	& 4			& 850		\\
			46			&  2 109	& 4			& 649 579	& 11		& 80 529	& 7			& 766		& 6			& 22		\\
			47			&  9 863	& 5			& 1 038 937	& 			& 			& 8			& 10		& 8			& 55		\\
			48			&  35 812	& 6			& 339 395	& 			& 			& 			& 			& 10		& 2			\\
			49			&  101 468	& 7			& 6 581		& 			& 			& 			& 			& 12		& 11		\\
			50			&  223 881	& 			& 			& 			& 			& 			& 			& 14		& 4			\\
			51			&  378 614	& 			& 			& 			& 			& 			& 			& 16		& 4			\\
			52			&  478 582	& 			& 			& 			& 			& 			& 			& 20		& 1			\\
			53			&  436 693	& 			& 			& 			& 			& 			& 			& 24		& 4			\\
			54			&  273 824	& 			& 			& 			& 			& 			& 			& 			& 			\\
			55			&  110 592	& 			& 			& 			& 			& 			& 			& 			& 			\\
			56			&  26 099	& 			& 			& 			& 			& 			& 			& 			& 			\\
			57			&  3 150	& 			& 			& 			& 			& 			& 			& 			& 			\\
			58			&  160		& 			& 			& 			& 			& 			& 			& 			& 			\\
			59			&  4		& 			& 			& 			& 			& 			& 			& 			& 			\\
			\hline
		\end{tabular}
	}
	\caption{Statistics of the graphs in $\mL(15; 1)$}
	\label{table: mL(15; 1) statistics}
\end{table}

\begin{table}
	\centering
	\resizebox{\textwidth}{!}
	{
		\begin{tabular}{ | l r | l r | l r | l r | l r | l r | l r | }
			\hline
			\multicolumn{2}{|c|}{\hbox to 2.2cm{$|\E(G)|$ \hfill $\#$}}&\multicolumn{2}{|c|}{\hbox to 2.2cm{$\delta(G)$ \hfill $\#$}}&\multicolumn{2}{|c|}{\hbox to 2.2cm{$\Delta(G)$ \hfill $\#$}}& \multicolumn{2}{|c|}{\hbox to 2.2cm{$\alpha(G)$ \hfill $\#$}}&\multicolumn{2}{|c|}{\hbox to 2.2cm{$|Aut(G)|$ \hfill $\#$}}\\
			\hline
			47			&  2		& 4			& 7			& 8			& 20		& 4			& 5			& 1			& 5			\\
			48			&  5		& 5			& 10		& 			& 			& 5			& 15		& 2			& 12		\\
			49			&  7		& 6			& 3			& 			& 			& 			& 			& 4			& 3			\\
			50			&  3		& 			& 			& 			& 			& 			& 			& 			& 			\\
			51			&  1		& 			& 			& 			& 			& 			& 			& 			& 			\\
			52			&  2		& 			& 			& 			& 			& 			& 			& 			& 			\\
			\hline
		\end{tabular}
	}
	\caption{Statistics of the graphs in $\mL(15; 1) \setminus \mH_v(3, 3; 5; 15)$}
	\label{table: mL(15; 1) setminus mH_v(3, 3; 5; 15) statistics}
\end{table}

\begin{figure}
	\centering
	\includegraphics[trim={0 0 0 490},clip,height=150px,width=150px]{./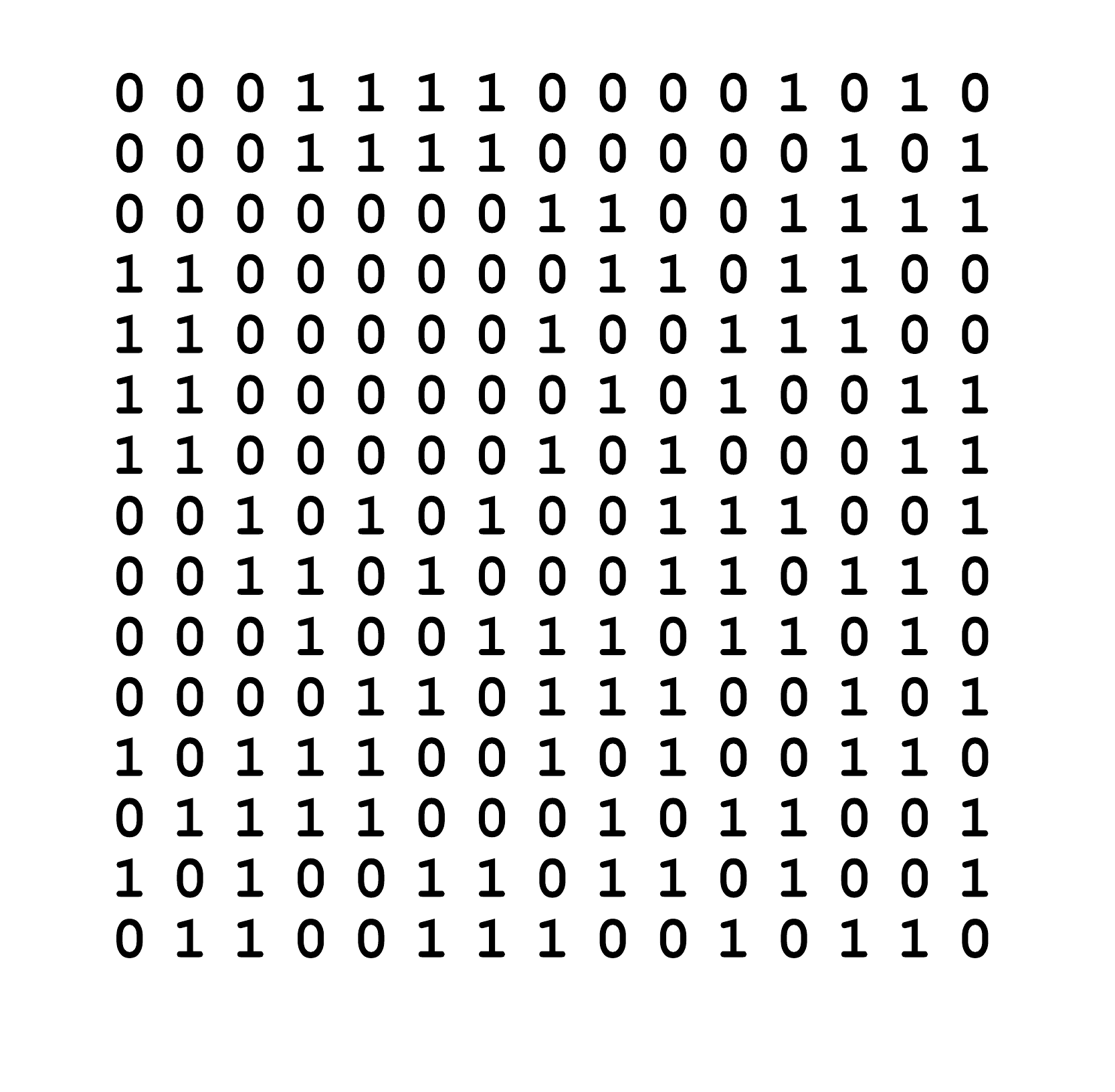}
	\caption{Example of a graph in $\mL(15; 1) \setminus \mH_v(3, 3; 5; 15)$}
	\label{figure: L_15_1_-_fv33_example}
\end{figure}

\begin{remark}
\label{remark: mL(15; 1) and mH_v(3, 3; 4; 15)}
Let us note, that $\mH_v(3, 3; 4; 14) = \mL(14; 1)$ \cite{PRU99}, but $\mH_v(3, 3; 4; 15) \neq \mL(15; 1)$. We found all 2 081 234 graphs $\mL(15; 1)$. In the proof of the Main Theorem we already found the only 2 graphs in $\mL_{max}(15; 1; 3)$ and all 5770 graphs in $\mL_{max}(15; 1; 4)$. With the help of Algorithm \ref{algorithm: finding mL_(max)(n; p; s)}, similarly to the case $s = 4$, we determine $\mL_{max}(15; 1; s), s \geq 5$. We obtain all 826 graphs $\mL_{max}(15; 1; 5)$, all 12 graphs in $\mL_{max}(15; 1; 6)$ and $\mL_{max}(15; 1; s) = \emptyset, s \geq 7$. Thus, we obtain all \mbox{6 610} maximal $K_4$-free graphs in $\mL(15; 1)$. By removing edges from them, we find all 2 081 234 graphs in $\mL(15; 1)$. Some properties of these graphs are listed in Table \ref{table: mL(15; 1) statistics}. Among the graphs in $\mL(15; 1)$ there are exactly 20 graphs, which are not in $\mH_v(3, 3; 4; 15)$. Properties of these 20 graphs are given in Table \ref{table: mL(15; 1) statistics}, and one of these graphs (which has 51 edges) is shown in Figure \ref{figure: L_15_1_-_fv33_example}.
\end{remark}

\subsection*{Proof of theorem \ref{theorem: 20 leq F_v(2, 3, 3; 4) leq 24}}

\textit{1. Proof of the inequality $F_v(2, 3, 3; 4) \geq 20$.}

It is enough to prove that $\mH_v(2, 3, 3; 4; 19) = \emptyset$. Suppose the opposite is true and let $G \in \mH_v(2, 3, 3; 4; 19)$ be a maximal $K_4$-free graph. Since $\mH_v(2, 3, 3; 4; 18) = \emptyset$, $G$ is not a Sperner graph. From Proposition \ref{proposition: H in mH_v(3, 3; 4; n - abs(A))} and $F_v(3, 3; 4) = 14$ it follows that $\alpha(G) \leq 5$, and from $R(4, 4) = 18$ it follows that $\alpha(G) \geq 4$. Therefore, the following two cases are possible:

\emph{Case 1: $\alpha(G) = 4$.} Let $A \subseteq \V(G)$ be an independent set of vertices of $G$, $\abs{A} = 4$ and $H = G - A$. From Proposition \ref{proposition: H in mH_v(3, 3; 4; n - abs(A))} it follows that $H \in \mH_v(3, 3; 4; 15)$ and from Proposition \ref{proposition: mH_v(3, 3; 4; n) subseteq mL(n; 1)} we obtain $H \in \mL(14; 1; s), s \leq 4$. Since $G$ is maximal $K_4$-free graph, it follows that 
\begin{equation}
\label{equation: H in mL_(+K_3)(15; 1; s), s leq 4}
H \in \mL_{+K_3}(15; 1; s), s \leq 4.
\end{equation}
We execute Algorithm \ref{algorithm: finding mL_(max)(n; p; s)} ($n = 19, p = 0, s = 4$). According to (\ref{equation: H in mL_(+K_3)(15; 1; s), s leq 4}), $H \in \mathcal{A}$, and therefore, from Lemma \ref{lemma: finding max supgraphs} we obtain that, after the execution of step 2.3 of the algorithm, $G \in \mathcal{B}$. By (\ref{equation: G arrowsv (2, 3, 3) Rightarrow chi(G) geq 6}), $G \in \mathcal{B}$ after step 4. On the other hand, in the first part of the proof of the Main Theorem we found all 2 597 graphs which are in $\mathcal{B}$ after step 4. None of these graphs belongs to $\mH_v(2, 3, 3; 4)$, which is a contradiction.

\emph{Case 2: $\alpha(G) = 5$.} In the same way as in the proof of Case 1, we see that after the execution of step 4 of Algorithm \ref{algorithm: finding mL_(max)(n; p; s)} $G \in \mathcal{B}$. In the second part of the proof of the Main Theorem we found all 31 graphs in $\mathcal{B}$. None of these graphs belongs to $\mH_v(2, 3, 3; 4)$, which is a contradiction.\\

\begin{figure}
	\centering
	\includegraphics[trim={0 0 0 490},clip,height=240px,width=240px]{./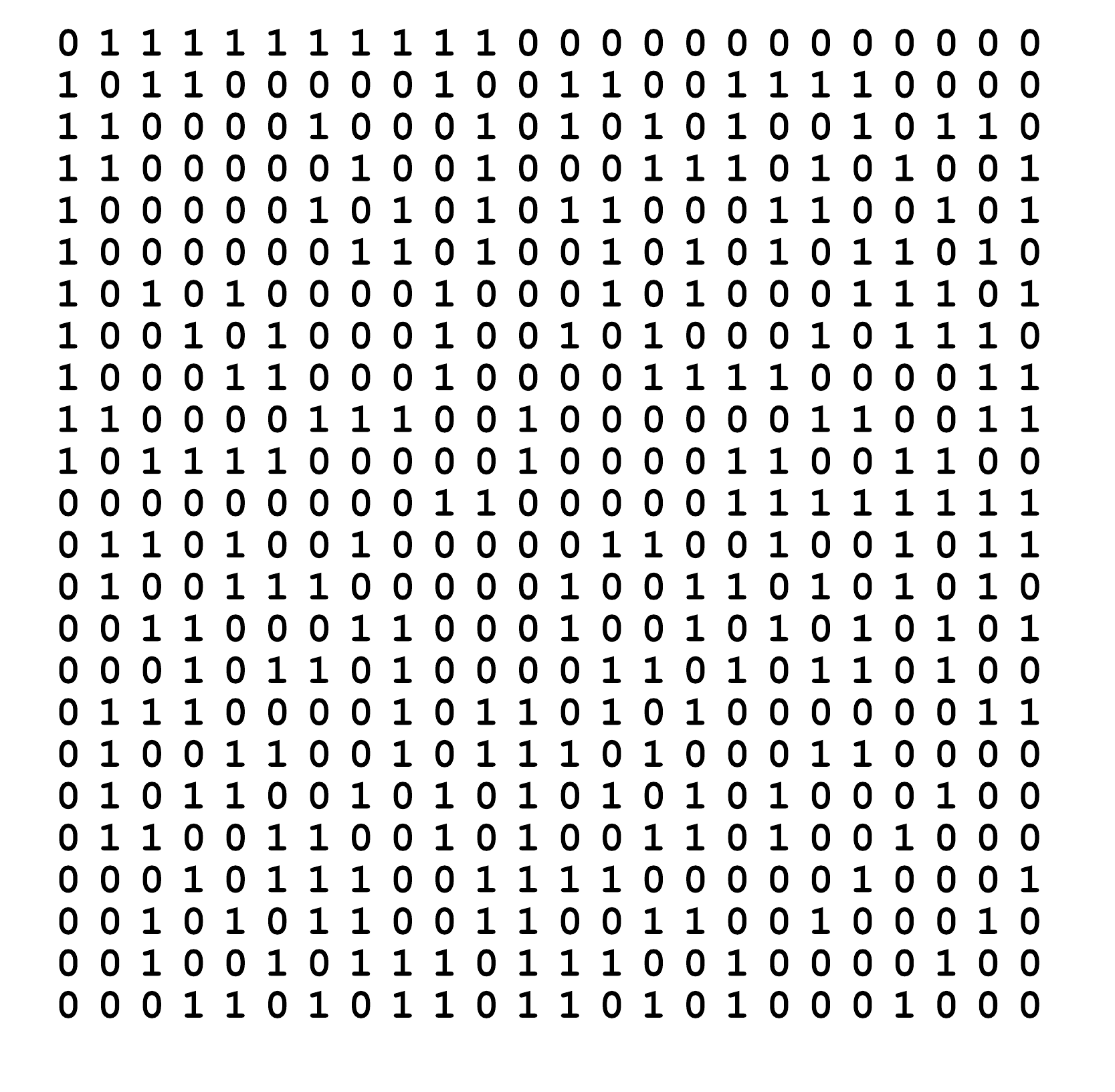}
	\caption{24-vertex transitive graph in $\mH_v(2, 3, 3; 4)$}
	\label{figure: fv233_trans_24}
\end{figure}

\textit{2. Proof of the inequality $F_v(2, 3, 3; 4) \leq 24$.}

All vertex transitive graphs with up to 31 are known and can be found in \cite{Roy_t}. With the help of a computer we check which of these graphs belong to $\mH_v(2, 3, 3; 4)$. In this way, we find one 24-vertex graph, one 28-vertex graph and 6 30-vertex graphs in $\mH_v(2, 3, 3; 4)$. The only 24-vertex transitive graph in $\mH_v(2, 3, 3; 4)$ is given on Figure \ref{figure: fv233_trans_24}. It does not have proper subgraphs in $\mH_v(2, 3, 3; 4)$, but by adding edges to this graph we obtain 18 more graphs in $\mH_v(2, 3, 3; 4; 24)$, two of which are maximal $K_4$-free graphs.




\begin{thebibliography}{10}
	
	\bibitem{Bik16}
	A.~Bikov.
	\newblock {Small minimal $(3, 3)$-Ramsey graphs}.
	\newblock preprint: arxiv:1604.03716, April 2016.
	
	\bibitem{DR08}
	A.~Dudek and V.~R\"odl.
	\newblock {On the {F}olkman Number $f(2, 3, 4)$}.
	\newblock {\em Experimental Mathematics}, 17:63--67, 2008.
	
	\bibitem{Erd75}
	P.~Erd{\"o}s.
	\newblock {Problems and results on finite and infinite graphs}.
	\newblock {Recent Advances in Graph Theory \em Proc. Second Czechoslovak Sympos.}, Prague, 1974, 183--192, Academia, Prague, 1975.
	
	\bibitem{EH67}
	P.~Erd{\"o}s and A.~Hajnal.
	\newblock {Research problem 2-5}.
	\newblock {\em J. Combin. Theory}, 2:104, 1967.
	
	\bibitem{Fol70}
	J.~Folkman.
	\newblock {Graphs with monochromatic complete subgraph in every edge coloring}.
	\newblock {\em SIAM J. Appl. Math.}, 18:19--24, 1970.
	
	\bibitem{FR86}
	P.~Frankl and V.~R\"odl.
	\newblock {Large triangle-free subgraphs in graphs without $K_4$}.
	\newblock {\em Graphs and Combinatorics}, 2:135--144, 1970.
	
	\bibitem{Gra68}
	R.~L. Graham.
	\newblock {On edgewise 2-colored graphs with monochromatic triangles containing
		no complete hexagon}.
	\newblock {\em J. Combin. Theory}, 4:300, 1968.
	
	\bibitem{Gra12}
	R.~L. Graham.
	\newblock {Some Graph Theory Problems I Would Like to See Solved}.
	\newblock {\em SIAM My Favorite Graph Theory Conjectures}, Halifax, 2012.
	
	\bibitem{LRX14}
	A.~Lange, S.~Radziszowski, and X.~Xu.
	\newblock {Use of MAX-CUT for Ramsey Arrowing of Triangles}.
	\newblock {\em Journal of Comb. Math. and Comb. Comp.}, 88:61-71, 2014.
	
	\bibitem{Lin72}
	S.~Lin.
	\newblock {On Ramsey numbers and $K_r$-coloring of graphs}.
	\newblock {\em J. Combin. Theory Ser. B}, 12:82--92, 1972.

	\bibitem{Lu08}
	L.~Lu.
	\newblock {Explicit Construction of Small Folkman Graphs}.
	\newblock {\em SIAM J. on Discrete Math.}, 21:1053--1060, 2008.
	
	\bibitem{LRU01}
	T.~Luczak, A.~Ruci{\'n}ski, and S.~Urba{\'n}ski
	\newblock {On minimal vertex Folkman graphs}.
	\newblock {\em Discrete Mathematics}, 236:245--262, 2001.
	
	\bibitem{McK_r}
	B.D.~McKay.
	\newblock \url{http://cs.anu.edu.au/~bdm/data/ramsey.html}.
	
	\bibitem{MP13}
	B.~D. McKay and A.~Piperino.
	\newblock Practical graph isomorphism, {II}.
	\newblock {\em J. Symbolic Computation}, 60:94--112, 2013.
	\newblock Preprint version at \href{http://arxiv.org/abs/1301.1493}{arxiv.org}.
	
	\bibitem{Nen81}
	N.~Nenov.
	\newblock {An example of a 15-vertex Ramsey (3, 3)-graph with clique number 4.
		(in Russian)}.
	\newblock {\em C. A. Acad. Bulg. Sci.}, 34:1487--1489, 1981.
	
	\bibitem{Nen83}
	N.~Nenov.
	\newblock {On the {Z}ykov numbers and some its applications to {R}amsey theory.
		(in Russian)}.
	\newblock {\em Serdica Bulg. math. publ.}, 9:161--167, 1983.
	
	\bibitem{Nen84}
	N.~Nenov.
	\newblock {The chromatic number of any 10-vertex graph without 4-cliques is at most 4.
		(in Russian)}.
	\newblock {\em C. A. Acad. Bulg. Sci.}, 37:301--304, 1984.
	
	\bibitem{Nen01}
	N.~Nenov.
	\newblock {Computation of the vertex {F}olkman numbers $F(2, 2, 2, 3; 5)$ and $F(2, 3, 3; 5)$}.
	\newblock {\em Ann. Univ. Sofia Fac. Math. Inform.}, 95:71--82, 2001.
	
	\bibitem{PRU99}
	K.~Piwakowski, S.~Radziszowski, and S.~Urba{\'n}ski.
	\newblock {Computation of the Folkman number $F_e(3, 3; 5)$}.
	\newblock {\em J. Graph Theory}, 32:41--49, 1999.
	
	\bibitem{Rad14}
	S.~Radziszowski.
	\newblock {Small {R}amsey numbers}.
	\newblock {\em The Electronic Journal of Combinatorics}, Dynamic Survey
	revision 14, January 12 2014.
	
	\bibitem{RX07}
	S.~Radziszowski and X.~Xu.
	\newblock {On the Most Wanted Folkman Graph}.
	\newblock {\em Geombinatorics}, XVI(4):367--381, 2007.
	
	\bibitem{RX14}
	S.~Radziszowski and X.~Xu.
	\newblock {On Some Open Questions for Ramsey and Folkman Numbers}.
	\newblock {To appear in \em Graph Theory, Favorite Conjectures and Open Problems}, 2014.
	
	\bibitem{Roy_t}
	G.~Royle.
	\newblock \url{http://staffhome.ecm.uwa.edu.au/~00013890/trans/}.
	
	\bibitem{SLHX11}
	Z.~Shao, M.~Liang, J.~He, and X.~Xu.
	\newblock {New Lower Bounds for Two Multicolor Vertex Folkman Numbers}.
	\newblock {\em International Conference on Computer and Management (CAMAN)}, Wuhan, China, 1-3, 2011.
	
	\bibitem{SXL09}
	Z.~Shao, X.~Xu, and H.~Luo.
	\newblock {Bounds for two multicolor vertex Folkman numbers (in Chinese)}.
	\newblock {\em Application Research of Computers}, 26:834--835, 2009.
	
	\bibitem{Soi08}
	A.~Soifer.
	\newblock {\em {The Mathematical Coloring Book}}.
	\newblock Springer, 2008.
	
	\bibitem{Spe88}
	J.~Spencer.
	\newblock {\em {Three hundred million points suffice}}.
	\newblock {\em J. Combin. Theory Ser. A}, 49:210--217, 1998.
	\newblock Also see erratum by M.~Hovey in 50:323.
	
	\bibitem{Wes01}
	D.~West.
	\newblock {\em {Introduction to Graph Theory}}.
	\newblock Prentice Hall, Inc., Upper Saddle River, 2 edition, 2001.
	
\end{thebibliography}


\clearpage


\end{document}